 \newtheorem{thm}{Theorem}[section]
 \newtheorem{lem}[thm]{Lemma}
 \theoremstyle{definition}
 \newtheorem{defn}[thm]{Definition}
 \theoremstyle{remark}
 \newtheorem{rem}[thm]{Remark}
 \numberwithin{equation}{section}
\newcommand{\ccomma}{\mathpunct{\raisebox{0.5ex}{,}}}
\begin{document}

%
%
%
%
%

\title[A positivity property suggested by BMV ]
 {A positivity property\\ of a Quantum Anharmonic Oscillator\\
  suggested by the BMV conjecture}
\author[V. Katsnelson]{Victor Katsnelson}
\address{%
Department of Mathematics\\
The Weizmann Institute\\
76100, Rehovot\\
Israel}
\email{victor.katsnelson@weizmann.ac.il; victorkatsnelson@gmail.com}
\subjclass{Primary 15A15,15A16; Secondary 30F10,44A10}
\keywords{The quantum anharmonic oscillator, BMV conjecture, absolutely monotonic functions.}
\date{October 1, 2014}

\dedicatory{Dedicated to the blessed memory of Herbert Stahl}
\begin{abstract}
In this work an observation concerning a positivity property
of the quantum anharmonic oscillator is made. This positivity
property is suggested by the BMV conjecture.
\end{abstract}
\maketitle
 \section{Herbert Stahl's Theorem.}
In the paper \cite{BMV} a conjecture was
formulated which now is commonly known as the BMV conjecture:\\[1.0ex]
\textbf{The BMV Conjecture.} Let \(A\) and \(B\) be Hermitian matrices of size
\(n\times{}n\). Assume moreover that the matrix \(B\not=0\) is positive semidefinite that is
\begin{equation}
\label{trf}
x^{\ast}Bx\geq0 \ \ \forall\ \ n\times1\ \textup{vector-columns}\ x.
\end{equation}
Then the function
\begin{equation}
\label{TrF}
\varphi(t)=
\textup{trace}\,\{\exp[-(A+tB)]\}
\end{equation}
of the variable \(t\) is representabe as a Laplace transform of a \textsf{non-negative} measure
\(d\sigma_{A,B}(\lambda)\) supported on the positive half-axis:
\begin{equation}
\label{LaR}
\varphi(t)=\!\!\int\limits_{\lambda\in[0,\infty)}\!\!\exp(-\lambda{}t)\,d\sigma_{A,B}(\lambda), \ \ \forall \,t\in(0,\infty).
\end{equation}

{\ }\\[3.0ex]
Let us note that the function \(\varphi(t)\), considered for \(t\in\mathbb{C}\), is an entire function of exponential type. The indicator diagram of the function \(\varphi\) is
the closed interval \([-\lambda_{\max},-\lambda_{\min}]\), where \(\lambda_{\min}\) and
\(\lambda_{\max}\) are the least and the greatest eigenvalues of the matrix \(B\) respectively.
Thus if the function \(\varphi(t)\) is representable in the form \eqref{LaR} with a non-negative
measure \(d\sigma_{A,B}(\lambda)\), then \(d\sigma_{A,B}(\lambda)\) is actually supported on the interval
\([\lambda_{\min},\lambda_{\max}]\) and the representation
\begin{equation}
\label{LaRm}
\varphi(t)=
\hspace*{-8.0pt}\int\limits_{\lambda\in[\lambda_{\min},\lambda_{\max}]}\hspace*{-8.0pt}
\exp(-\lambda{}t)\,d\sigma_{A,B}(\lambda), \ \ \forall \,t\in\mathbb{C},
\end{equation}
holds for every \(t\in\mathbb{C}\).

The representability of the function \(\varphi(t)\), \eqref{TrF}, in the form \eqref{LaRm}
with a non-negative \(d\sigma_{A,B}\) is evident if the matrices \(A\) and \(B\) commute.
In this case \(d\sigma(\lambda)\) is an atomic measure located on the spectrum of the matrix \(B\).
In general case, if the matrices \(A\) and \(B\) do not commute, the BMV conjecture remained an
open question for longer than 35 years. In 2011, Herbert
Stahl gave an affirmative answer to the BMV conjecture.\\[1.0ex]
\textbf{Theorem}\,(H.Stahl) \textit{Let \(A\) and \(B\) are \(n\times{}n\) hermitian matrices and \(B\) is positive semi-definite.}
\textit{Then the function \(\varphi(t)\) defined by \eqref{TrF} is representable
as the Laplace transform \eqref{LaRm} of a non-negative measure \(d\sigma_{A,B}(\lambda)\)
supported on the interval \([\lambda_{\min},\lambda_{\max}]\).}

The first arXiv version of H.Stahl's Theorem appeared in \cite{S1}, the latest arXiv version -
in \cite{S2}, the journal publication - in \cite{S3}.

The proof of Herbert Stahl is based on ingenious considerations related to
Riemann surfaces of algebraic functions. In \cite{E}, a simplified version of the Herbert Stahl proof is presented.
\section{The goal of this paper.}
In the BMV conjecure, which now is the Herbert Stahl theorem, the subject of consideration is the function \(\varphi(t)\) of the form \eqref{TrF}, where
a linear pencil \(A+tB\) of square matrices \(A\) and \(B\) of arbitrary \textsf{finite} size
appears in the exponential. The goal of the present paper is to discuss a~special example
of a function \(\varphi(t)=\text{trace}\{\exp[-(A+tB)]\}\), where \(A+tB\) is
a linear pencil of unbounded non-negative operators in the space \(L^2(\mathbb{R})\).

Namely we consider the operators \(A\) and \(B\) generated in \(L^2(\mathbb{R})\) by the expressions
\begin{gather}
\label{DO}
(Af)(x)=-\frac{d^2f(x)}{dx^2},\\
\label{MO}
(Bf)(x)=V(x)f(x),
\end{gather}
where the following conditions are posed on the real-valued function \(V\):
\begin{enumerate}
\item
The function \(V\) is defined and continuous on the real axis: \(V\in{}C(-\infty,\infty)\).
\item {\ }
\vspace{-3.5ex}
\begin{gather}
\label{mon}
V(0)=0, \\
 \hspace{2.0ex} V(x) \ \text{is strictly increasing on }  [0,+\infty):\,V(x_1)<V(x_2) \text{ if } 0\leq{}x_1<x_2, \notag\\
\hspace{2.0ex} V(x) \ \text{is strictly decreasing on }  (-\infty,0]:\,V(x_1)>V(x_2) \text{ if } x_1<x_2\leq0.\notag
\end{gather}
{\ }\\[-3.0ex]
In particular, \(V(x)>0\) for \(x\in(-\infty,\infty)\setminus 0\).\\
\item{\ }
\vspace{-3.9ex}
\begin{equation}
\label{Unb}
V(x)\to+\infty \ \text{ as } \ x\to\pm\infty\,.
\end{equation}
\end{enumerate}

Let \(\mathscr{D}\) be the set of all smooth complex valued compactly supported functions,
\(\mathscr{D}\subset{}L^2(\mathbb{R})\).
Each of the operators \(A\) and
\(B\) is a symmetric operator defined on \(\mathscr{D}\). Both of these operators are non-negative on
\(\mathscr{D}\):
\begin{equation}
\label{Nn}
\langle{}Af,f\rangle\geq0,\quad \langle{}Bf,f\rangle\geq0, \ \ \forall f\in\mathscr{D},
\end{equation}
where \(\langle\,.\,,\,.\,\rangle\) is the standard scalar product in \(L^2(\mathbb{R})\).
For each \(t>0\), the linear combination \((A+tB)f\) is defined for every \(f\) from
common domain of definition \(\mathscr{D}\) of the operators \(A\) and \(B\). Moreover, the operator \(L_t=(A+tB)\) is non-negative on \(\mathscr{D}\):
\begin{equation}
\label{NonN}
\big\langle{}(A+tB)f,f\big\rangle\geq0, \ \ \forall \ \ f\in\mathscr{D}.
\end{equation}
The operator \(L_t=A+tB\) admits a selfadjoint
extension from \(\mathscr{D}\) to a domain of definition \(\mathscr{D}_t\).
This selfadjoint extension is non-negative on \(\mathscr{D}_t\). We preserve the notation \(L_t\) for the extended operator. It turns out that such extension is unique.

\emph{The operator \(L_t=A+tB\), where \(A\) is of the form \eqref{DO},  \(B\) is of the form
\eqref{MO}, the function \(V\) satisfies the conditions \eqref{mon} and \eqref{Unb}, and \(t>0\)
is called \textsf{the quantum anharmonic oscillator}. The function \(V(x)\) is said to be
\textsf{the potential function}\\
In the special case \(V(x)=x^2\), the operator \(L_t\) is said to be
\textit{\textsf{the quantum harmonic oscillator}}. }\\

For one-parametric family \(L_t=-\frac{d^2}{dx^2}+tV(x)\) of anharmonic oscillators,
we formulate the conjecture which is an analog the BMV conjecture. For one-parametric family \(L_t\) of anharmonic oscillators with a \emph{homogeneous} potential
function (see Definition below), we confirm the analog of the BMV conjecture even in a stronger form.\\

\noindent
\begin{defn}
\label{DeHom} \textit{Let \(\rho>0\) be a positive number. The potential function \(V\)
is said to be \textsf{homogeneous of order \(\rho\)} if}
\begin{equation}
\label{hom}
V(\xi x)=\xi^{\rho}V(x), \ \ \forall\,\xi>0,\,\forall\,x\in(-\infty,\infty).
\end{equation}
\end{defn}

\vspace{2.0ex}
It is clear that any homogeneous potential function \(V\) is of the form
\begin{gather}
\label{Pot}
V(x)=
\begin{cases}
c^{+}|x|^{\rho}, \ \text{for} \ x>0,\\
c^{-}|x|^{\rho}, \ \text{for} \ x<0,
\end{cases}
\end{gather}
where
\begin{equation}
\rho>0,\,c^{+}>0,\,c^{-}>0 \ \text{are strictly positive constants}.
\end{equation}

{\ }\\
\section{The spectrum of quantum anharmonic oscillator.}
We would not like to discuss the questions related to the domain of definition of the
operator~\(L_t\). The only fact which is important for us is the following:\\

\noindent
\textbf{Lemma 1.}
\begin{enumerate}
\item[\textbf{1.}]
\textit{For each \(t\in(0,\infty)\), the spectrum of the anharmonic operator \(L_t\) is discrete.}
\textit{The eigenvalue problem
\begin{equation}
\label{EVPt}
-\frac{d^2f(x)}{dx^2}+tV(x)f(x)=\lambda(t)f(x), \ \ f(x)\in{}L^2(\mathbb{R}),\ f\not\equiv0,
\end{equation}
has solution only for \(\lambda(t)=\lambda_n(t)\), where \(\lambda_n(t)\),
\(n=0,\,1,\,2,\,3,\,\ldots\),
is a~sequence of strictly positive numbers tending to \(\infty\)}:
\begin{gather}
\label{SpSeT}
0<\lambda_0(t)<\lambda_1(t)<\lambda_2(t)<\lambda_3(t)<\,\ldots,\\
\label{SpSeI}
\lambda_n(t)\to\infty \ \text{as} \ n\to\infty, \ \forall\,t>0.
\end{gather}
\item[\textbf{2.}]
\textit{If the potential function \(V\) of the anharmonic oscillator is homogeneous of order
\(\rho>0\), then
for each \(n=0,\,1,\,2,\,3,\,\ldots\), the eigenvalue \(\lambda_n(t)\) is a homogeneous function of \(t\)
of order \(2/(2+\rho)\)}:
\begin{equation}
\label{HoOr}
\lambda_n(t)=t^{2/(2+\rho)}\lambda_n(1),\ \ n=0,\,1,\,2,\,3,\,\ldots , \ 0<t<\infty.
\end{equation}
\item[\textbf{3.}]
\textit{For each \(t>0\), the operator \(e^{-L(t)}\) is the trace class operator}:
\begin{equation}
\label{TrC}
\textup{trace}\,e^{-L(t)}=\sum\limits_{0\leq{}n<\infty}\exp[-\lambda_n(t)]<\infty,\
\ \forall\,t\in(0,\infty).
\end{equation}
\end{enumerate}
\begin{proof} {\ }\\
\textbf{1.} For each \(t>0\), the operator \(L_t\) is of the form
\begin{equation}
\label{StL}
-\frac{d^2{\ }}{dx^2}+V(x),
\end{equation}
where the potential function \(V(x)\geq 0\) satisfies the condition \eqref{Unb}.
It is well known\footnote{See for example \cite[Chapt.II, Sect.28]{Gl}, Theorem 5.} that under condition \eqref{Unb}, the spectrum of the operator \eqref{StL}
is discrete. Since \(V(x)\) is positive for \(x\not=0\), the spectrum is strictly positive. Thus the conditions \eqref{SpSeT} and
\eqref{SpSeI} hold.\\
\textbf{2.} Let \(\lambda\) be an eigenvalue of the operator \(L_1\), that is the equation
\begin{equation}
\label{EVP1}
-\frac{d^2f(x)}{dx^2}+V(x)f(x)=\lambda{}f(x), \ \ f(x)\in{}L^2(\mathbb{R}),\ f\not\equiv0,
\end{equation}
has a solution. Taking arbitrary \(\xi>0\), we change variable \(x\to{}y=\xi{}x\). The equation \eqref{EVP1}
will be transformed to the equation
\begin{equation}
\label{EVPtCh}
-\frac{d^2g(y)}{dy^2}+tV(y)g(x)=\lambda(t)g(y),
\end{equation}
where
\begin{equation}
\label{ChT}
t=\xi^{-(\rho+2)},\ \lambda(t)=t^{2/\rho+2}\lambda, \ g(y)=f(t^{1/(\rho+2)}y).
\end{equation}

\vspace*{2.0ex}
\noindent
\textbf{3.}
\vspace*{-2.8ex}
\begin{equation}
\text{For } r>0, \ \text{let } \ N(r)=\#\{k:\,\lambda_k< r\},
\end{equation}
-- the number of those eigenvalues \(\lambda_k\) of the eigenvalue problem \eqref{EVP1}
which are less than \(r\).
In \cite{deWM}, the asymptotic relation
\begin{equation}
\label{GAR}
N(r)\sim\int\limits_{\eta:V(\eta)<r}\sqrt{r-V(\eta)}\,d\eta, \ \ r\to+\infty.
\end{equation}
was established for a wide class of potential functions \(V(x)\). In particular, \eqref{GAR}
holds for any homogeneous potential function \(V\).
Using \eqref{Pot}, one can transform \eqref{GAR} to the form
\begin{equation}
\label{AsR}
N(r)\sim C_{V}\cdot{}r^{\frac{1}{2}+\frac{1}{\rho}} \ \ \text{as} \ \ r\to+\infty,
\end{equation}
 where
\begin{equation}
\label{CV}
C_V=(c_{+}^{1/\rho}+c_{-}^{1/\rho})\frac{1}{\rho}\frac{1}{2\sqrt{\pi}}
\frac{\Gamma(\frac{1}{\rho})}{\Gamma(\frac{1}{\rho}+\frac{3}{2})}\cdot
\end{equation}
The condition \eqref{TrC} follows from the asymptotic relation \eqref{AsR} and from
\eqref{HoOr}. Indeed
\begin{multline*}
\sum\limits_{0\leq n<\infty}e^{-\lambda_n(t)}=
\sum\limits_{0\leq n<\infty}e^{-t^{\frac{2}{\rho+2}}\lambda_n}=
\int\limits_{0\leq{}r<\infty}e^{-t^{\frac{2}{\rho+2}}r}\,dN(r)=\\
=t^{\frac{2}{\rho+2}}\int\limits_{0\leq{}r<\infty}e^{-t^{\frac{2}{\rho+2}}r}\,N(r)\,dr<\infty\,.
\end{multline*}
\end{proof}

\vspace{2.0ex}
\noindent
\begin{rem}
The detailed spectral analysis of the quantum harmonic oscillator was done by P.A.M.\,Dirac in 1930, \cite{D}.
For quantum harmonic oscillator \(L=-\dfrac{d^2\,}{dx^2}+x^2\), the eigenvalues \(\lambda_n\)
and the eigenfunctions \(h_n(x)\) are:
\begin{equation}
\label{SpHO}
\lambda_n=2n+1,\ \ h_n(x)=e^{x^2/2}\cdot\frac{d^n}{dx^n}e^{-x^2}\,.
\end{equation}
The method of spectral analysis invented by Dirac is purely algebraic. Now this method is known as \emph{the method of ladder operators}.
For systematic presentation of the method of ladder operators we refer to \cite[sec.2.3.1]{G}.
\end{rem}
\section{Absolutely monotonic functions.}
\noindent
\begin{defn}\label{DAMF}
 Let \(\Phi(t)\) be a function defined on an interval
\((a,b),\,(a,b)\subset\mathbb{R}\).
 The function \(\Phi(t)\) is said to be \textit{absolutely
monotonic on \((a,b)\)} if it satisfies the conditions
\begin{multline}
\label{PC}
\hfill\Delta_h^n\Phi(t)\geq0, \ \ \forall\,n=0,\,1,\,2,\,3,\,\ldots,
\forall\,t,\,h\,\in\mathbb{R}:\,a<t+nh<b,\hfill
\end{multline}
where \(\Delta_h^n\Phi(t)\stackrel{\text{\tiny def}}{=}
\sum\limits_{k=0}^{n}(-1)^{n-k}\binom{n}{k}\Phi(t+kh)\)
is the \(n\)-th difference of the function~\(\Phi\).
\end{defn}

\vspace{2.0ex}
In implicit form, absolutely monotonic functions appeared in the paper \cite{Ber1}
of S.N.\,Berstein. (The terminology "absolutely monotomic function" did not appear in \cite{Ber1}.) Systematic presentation of the theory of absolutely monotonic functions was done
in the paper \cite{Ber2}.
\noindent
Concerning absolutely monotonic functions and related questions, we refer to the books of
N.I.\,Akhiezer \cite[Chapter V, Section 5]{A} and D.W.\,Widder \cite[Chapter 4]{Wid}.

\vspace{3.0ex}
\noindent
\textbf{Theorem} (S.N.\,Bernstein)\\
\textbf{1.} Let a function \(\Phi(t)\) be absolutely monotonic on some interval \((a,b)\) of
the real axis. Then the function \(\Phi\) is infinitely differentiable on \((a,b)\):
\(\Phi\in{}C^{\infty}(a,b)\), and the conditions
\begin{equation}
\label{PD}
\Phi^{(n)}(t)\geq0, \ \ \forall\,t\in\,(a,b),\,n=0,\,1,\,2,\,\ldots\,,
\end{equation}
are satified, where \(\Phi^{(n)}(t)\) is the \(n\)-th derivative of the function \(\Phi\).\\
\textbf{2.} Let a function \(\Phi(t)\) be infinitely differentiable on some interval \((a,b)\) of the real axis, and let the conditions \eqref{PD} hold. Then the function \(\Phi\) is
absolutely monotonic on the interval \((a,b)\), that is the conditions \eqref{PC} hold.

\vspace{3.0ex}
\noindent
\begin{lem}\label{Lem 0}
 \textit{Let \(\{\Phi_m(t)\}\) be a sequence of functions each of them is
absolutely monotonic on an interval \((a,b)\) of the real axis. Assume that for each
\(t\in(a,b)\) there exist the final limit
\begin{equation}
\label{FL}
\Phi(t)\stackrel{\text{\tiny def}}{=}\lim_{m\to\infty}\Phi_{m}(t), \ \ t\in(a,b).
\end{equation}
Then the limiting function \(\Phi\) is absolutely monotonic on the interval \((a,b)\).}
\end{lem}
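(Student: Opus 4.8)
The plan is to prove the statement directly from the finite-difference definition (Definition \ref{DAMF}), which is the cleanest route since that definition is preserved under pointwise limits in an essentially trivial way. Fix any $n=0,1,2,\ldots$ and any $t,h\in\mathbb{R}$ with $a<t+nh<b$. For each $m$, since $\Phi_m$ is absolutely monotonic on $(a,b)$, the condition \eqref{PC} gives
\[
\Delta_h^n\Phi_m(t)=\sum_{k=0}^{n}(-1)^{n-k}\binom{n}{k}\Phi_m(t+kh)\geq0.
\]
Here I should note that all the points $t+kh$ for $k=0,1,\ldots,n$ lie in $(a,b)$: indeed $t+kh$ lies between $t$ and $t+nh$ (it is a convex combination of the two), and both endpoints lie in $(a,b)$, so by convexity of the interval each $t+kh\in(a,b)$. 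Hence the hypothesis \eqref{FL} applies at each of these finitely many points.

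Now let $m\to\infty$. Since $\Delta_h^n\Phi_m(t)$ is a fixed finite linear combination of the values $\Phi_m(t+kh)$, $k=0,\ldots,n$, and each of these converges to $\Phi(t+kh)$ by \eqref{FL}, we get
\[
\lim_{m\to\infty}\Delta_h^n\Phi_m(t)=\sum_{k=0}^{n}(-1)^{n-k}\binom{n}{k}\Phi(t+kh)=\Delta_h^n\Phi(t).
\]
A limit of a sequence of non-negative real numbers is non-negative, so $\Delta_h^n\Phi(t)\geq0$. Since $n$, $t$, $h$ were arbitrary subject to $a<t+nh<b$, the function $\Phi$ satisfies \eqref{PC} and is therefore absolutely monotonic on $(a,b)$.

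There is essentially no obstacle here: the only point requiring a moment's care is the bookkeeping that each intermediate node $t+kh$ lies in the open interval $(a,b)$, which is needed so that the pointwise convergence hypothesis \eqref{FL} is available at those nodes. One could alternatively argue via the Bernstein Theorem stated above by trying to pass the derivative inequalities \eqref{PD} to the limit, but that would require justifying $C^\infty$-convergence of the $\Phi_m$, which is extra work; the finite-difference formulation avoids this entirely and needs nothing beyond pointwise convergence. Hence the direct argument is preferable.
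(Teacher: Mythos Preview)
Your proof is correct and follows exactly the same approach as the paper: fix $n$, $t$, $h$ with $t\in(a,b)$ and $t+nh\in(a,b)$, and pass to the limit in the inequality $\Delta_h^n\Phi_m(t)\geq0$. You simply add more detail (the convexity remark and the comment on why the derivative route would be harder), but the argument is identical.
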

\begin{proof}
Take and fix \(n\). Take arbitrary \(t\) and \(h\) so that \(t\in(a,b),\,t+nh\in(a,b)\).
Passing to the limit in the inequality \(\Delta_h^n\Phi_m(t)\geq0\) as \(m\to\infty\),
we come to the inequality \(\Delta_h^n\Phi(t)\geq0\).
\end{proof}
\vspace{3.0ex}
\noindent
\textbf{Theorem} (S.N.\,Bernstein - D.V.\,Widder).\\
\textbf{1.} Let \(\Phi(t)\) be a absolutely monotonic function on the negative half-axis
\(t:\{-\infty<t<0\}\). Then
there exists a non-negative measure \(d\sigma(\lambda)\) supported on the half-axis
\([0,\infty)\) such that the function \(\Phi\) is representable
in the form
\begin{equation}
\label{InRe}
\Phi(t)=\int\limits_{\lambda\in[0,\infty)}e^{t\lambda}\,d\sigma(\lambda), \ \
\forall \, t\in(-\infty,0).
\end{equation}
Such a mesure \(d\sigma(\lambda)\) is unique.\\
\textbf{2.} Let \(d\sigma(\lambda)\) be a non-negative measure supported on the half-axis
\([0,\infty)\). Assume that the integral in the right hand side of \eqref{InRe} is finite
for each \(t\in(-\infty,0)\). Then the function \(\Phi(t)\) which is \emph{defined} by the
equality \eqref{InRe} is absolutely monotonic on \((-\infty,0)\).\\

Thus the Herbert Stahl theorem can be reformulated as follows:\\
\textit{Let \(A\) and \(B\) be Hermitian \(n\times{}n\) matrices, and moreover \(B\geq0\).
Let the function \(\varphi(t)\) is defined by \eqref{TrF}
for \(t\in(0,\infty)\). Then the function \(\varphi(-t)\), considered as a function of the variable \(t\), is absolutely monotonic on \((-\infty,0)\).}\\

\noindent
\begin{lem}
\label{Lem 2}
 \textit{Let \(\Psi(t)\) be an infinitely differentiable real-valued function
defined on the half-axis
\((-\infty,0)\). Assume that the derivative \(\Psi^{\prime}(t)\) of the function \(\Psi\)
is an absolutely monotonic function on \((-\infty,0)\), that is
\begin{equation}
\label{DCM}
\Psi^{(k)}(t)\geq 0 \ \ \forall\,t\in(-\infty,0), \ \ k=1,\,2,\,3,\,\ldots\,.
\end{equation}
Then the function
\begin{equation}
\label{EF}
\Phi(t)\stackrel{\textup{\tiny def}}{=}\exp\,[\Psi(t)]
\end{equation}
is absolutely monotonic.}
\end{lem}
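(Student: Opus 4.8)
The plan is to reduce the statement to the differential characterization of absolute monotonicity. By the theorem of S.N.\,Bernstein quoted above, a function that is $C^\infty$ on an interval is absolutely monotonic there if and only if all of its derivatives are non-negative on that interval; so it suffices to check that $\Phi=\exp[\Psi]$ lies in $C^\infty(-\infty,0)$ and that $\Phi^{(n)}(t)\geq0$ for every $n=0,1,2,\ldots$ and every $t\in(-\infty,0)$. That $\Phi$ is infinitely differentiable is immediate, since $\Psi\in C^\infty(-\infty,0)$ by hypothesis and $\exp$ is entire.

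First I would record the elementary identity $\Phi'(t)=\Psi'(t)\,\Phi(t)$ coming from the chain rule, and then differentiate it $n$ times by the Leibniz rule to obtain the recursion
\begin{equation}
\Phi^{(n+1)}(t)=\sum_{k=0}^{n}\binom{n}{k}\,\Psi^{(k+1)}(t)\,\Phi^{(n-k)}(t), \qquad n=0,1,2,\ldots,\quad t\in(-\infty,0).
\end{equation}
I would then run an induction on $n$. The base case is $\Phi^{(0)}(t)=e^{\Psi(t)}>0$ for all $t<0$. For the inductive step, suppose $\Phi^{(j)}(t)\geq0$ on $(-\infty,0)$ for all $j\leq n$; in the sum above every binomial coefficient is positive, every factor $\Psi^{(k+1)}(t)$ is non-negative because $k+1\geq1$ and \eqref{DCM} holds, and every factor $\Phi^{(n-k)}(t)$ is non-negative by the inductive hypothesis (since $0\leq n-k\leq n$). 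Hence $\Phi^{(n+1)}(t)\geq0$ on $(-\infty,0)$, which closes the induction; Bernstein's theorem then yields that $\Phi$ is absolutely monotonic on $(-\infty,0)$.

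I do not expect a genuine obstacle here; the one point that needs care is to use the hypothesis in the precise form in which it is given — only the derivatives $\Psi',\Psi'',\ldots$ are assumed non-negative, and no information about $\Psi$ itself (nor about its sign) is used or available. A completely equivalent alternative would be to expand $\Phi^{(n)}$ once and for all by the Fa\`a di Bruno formula as a finite sum of terms $c\,\Psi^{(m_1)}(t)\cdots\Psi^{(m_r)}(t)\,e^{\Psi(t)}$ with positive integer coefficients $c$ and all $m_i\geq1$; each such term is non-negative on $(-\infty,0)$, which gives the conclusion with no explicit induction. I would keep the Leibniz-recursion argument as the cleaner write-up.
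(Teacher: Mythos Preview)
Your proof is correct and is essentially the same as the paper's: the paper also reduces to Bernstein's differential criterion, notes $\Phi(t)=e^{\Psi(t)}>0$, and then shows by a recursion (equivalent to your Leibniz step, and to the Fa\`a di Bruno alternative you mention) that $\Phi^{(k)}(t)=\Phi(t)\cdot P_k\big(\Psi'(t),\ldots,\Psi^{(k)}(t)\big)$ for a polynomial $P_k$ with non-negative coefficients. The only cosmetic difference is that the paper packages the induction as a recursion for the polynomials $P_k$, while you run the induction directly on the sign of $\Phi^{(n)}$.
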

\noindent
\textit{Proof.} \
Since the function \(\Psi\) is real valued, the inequality
\begin{equation}
\label{Pos}
\Phi(t)>0, \ \ \forall \,t\in(-\infty,0),
\end{equation}
holds for the function \(\Phi\). To establish the inequalities
\begin{equation}
\label{kgo}
\Phi^{(k)}(t)\geq0, \ \ \forall\,t\in(-\infty,0), \ \ k=1,\,2,\,3,\,\ldots
\end{equation}
we remark that
\begin{equation}
\label{DP}
\Phi^{(k)}(t)=\Phi(t)
\cdot\,P_{k}(\Psi^{\prime}(t),\Psi^{\prime\prime}(t)\,\ldots,\,\Psi^{(k)}(t)),
\ \ k=1,2,\,\ldots\,,
\end{equation}
where \(P_k(y_1,\,\ldots\,,\,y_k)\) is a polynomial of the variables \(y_1,\,\ldots\,,y_k\) with \emph{non-negative} coefficients.
Indeed,
\begin{multline*}
P_1(y_1)=y_1, \ \ P_{k+1}(y_1,y_2,\,\ldots\,y_{k},y_{k+1})=\\
=y_1P_k(y_1,\,\ldots\,,\,y_k)+\sum\limits_{1\leq{}j\leq{k}}
\frac{\partial{}P_{k}}{\partial{}y_{j}}(y_1,\,\ldots\,,\,y_k)y_{j+1}, \ k=1,\,2,\,3,\,\ldots\,.
\tag*{\qed}
\end{multline*}

\begin{lem}\label{Lem 3} \textit{Given numbers \(a>0\) and \(\alpha\in(0,1)\), let
the function \(\Psi(t)\) be defined  as
\begin{equation}
\label{PF}
\Psi(t)=-a(-t)^{\alpha}, \ \ t\in(-\infty,0),
\end{equation}
where the branch of the function \(s^{\alpha}\) is chosen which takes
positive values for \(s>0\).}

\textit{Then the derivative \(\Psi^{\prime}(t)\) of the function \(\Psi(t)\) is a absolutely monotonic function on \((-\infty,0)\).}
\end{lem}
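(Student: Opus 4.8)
The plan is to reduce the assertion to the differential characterization of absolute monotonicity given in part~2 of Bernstein's theorem: since the branch of \(s^\alpha\) chosen is real-analytic for \(s>0\), the function \(\Psi\) is \(C^\infty\) on \((-\infty,0)\), and it suffices to verify that all derivatives \(\Psi^{(k)}(t)\) with \(k\ge 1\) are non-negative on \((-\infty,0)\) — i.e. exactly condition \eqref{DCM}, which says that \(\Psi'\) is absolutely monotonic.

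First I would pass to the variable \(s=-t>0\), so that \(\Psi=-a\,s^\alpha\) and \(\frac{d}{dt}=-\frac{d}{ds}\). Differentiating \(k\) times and using the elementary formula \(\frac{d^k}{ds^k}s^\alpha=\alpha(\alpha-1)\cdots(\alpha-k+1)\,s^{\alpha-k}\), I obtain
\[
\Psi^{(k)}(t)=(-1)^{k+1}\,a\,\alpha(\alpha-1)\cdots(\alpha-k+1)\,(-t)^{\alpha-k},\qquad k\ge 1,\ t<0.
\]

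The only point requiring care — and hence the "main obstacle", though it is really just sign bookkeeping — is to check the sign of the right-hand side. Among the \(k\) factors \(\alpha,\alpha-1,\dots,\alpha-k+1\), the first is positive and the remaining \(k-1\) are strictly negative; this is precisely where the hypothesis \(0<\alpha<1\) enters. Thus the product carries the sign \((-1)^{k-1}\), and multiplied by the chain-rule prefactor \((-1)^{k+1}\) the total sign is \((-1)^{2k}=+1\). Since \((-t)^{\alpha-k}>0\) for \(t<0\), we conclude \(\Psi^{(k)}(t)\ge 0\) for every \(k\ge 1\) and every \(t<0\), which is \eqref{DCM}; by Bernstein's theorem, \(\Psi'\) is absolutely monotonic on \((-\infty,0)\).

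As an alternative that avoids iterated differentiation, one may note directly that \(\Psi'(t)=a\alpha(-t)^{-(1-\alpha)}\) with \(1-\alpha\in(0,1)\), and invoke the classical identity \((-t)^{-\beta}=\frac{1}{\Gamma(\beta)}\int_{0}^{\infty}\lambda^{\beta-1}e^{t\lambda}\,d\lambda\) (valid for \(\beta>0\), \(t<0\)) with \(\beta=1-\alpha\). This exhibits \(\Psi'(t)=\int_{0}^{\infty}e^{t\lambda}\,d\sigma(\lambda)\) with \(d\sigma(\lambda)=\frac{a\alpha}{\Gamma(1-\alpha)}\lambda^{-\alpha}\,d\lambda\) a non-negative measure on \([0,\infty)\), the integral being finite for each \(t<0\) because \(\alpha<1\) makes \(\lambda^{-\alpha}\) integrable at the origin; the Bernstein–Widder theorem (part~2) then yields the absolute monotonicity of \(\Psi'\) at once.
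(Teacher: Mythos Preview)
Your proof is correct and follows essentially the same route as the paper: compute \(\Psi^{(k)}(t)\) explicitly and observe that the sign of the falling factorial \(\alpha(\alpha-1)\cdots(\alpha-k+1)\) combines with the chain-rule sign to give positivity for all \(k\ge1\). The Laplace-transform alternative you add at the end is a nice bonus not present in the paper, but the core argument is identical.
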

\begin{proof} \ For \(k=1,\,2,\,3,\,\ldots, \),
\begin{equation*}
\frac{d^k}{dt^k}\Psi(t)=
(-1)^{k-1}\alpha(\alpha-1)\cdot\,\,\cdots\,\,\cdot\big(\alpha-(k-1)\big)
(-t)^{\alpha-k}.
\end{equation*}
It is clear that \((-1)^{k-1}\alpha(\alpha-1)\cdot\,\,\cdots\,\,\cdot\big(\alpha-(k-1)\big)>0\).
\end{proof}

\begin{lem}\label{Lem 4} \textit{Given a numbers \(a>0\) and \(\alpha\in(0,1)\), let
the function \(\Phi_{a,\alpha}(t)\) be defined  as
\begin{equation}
\label{MF}
\Phi_{a,\alpha}(t)=\exp[-a(-t)^{\alpha}], \ \ t\in(-\infty,0),
\end{equation}
where the branch of the function \(s^{\alpha}\) is chosen which takes
positive values for \(s>0\).}

\textit{Then the function \(\Phi_{a,\alpha}\) is absolutely monotonic on the half-axis \((-\infty,0)\).}
\end{lem}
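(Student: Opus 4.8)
The plan is to derive Lemma~\ref{Lem 4} directly from the two preceding lemmas, so the argument will be very short. Introduce the auxiliary function $\Psi(t)=-a(-t)^{\alpha}$ on $(-\infty,0)$, with the same branch of $s^{\alpha}$ as in the statement; then $\Phi_{a,\alpha}(t)=\exp[\Psi(t)]$, matching the pattern of \eqref{EF} and \eqref{MF}.

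First I would check that this $\Psi$ satisfies the hypotheses of Lemma~\ref{Lem 2}. On the open half-axis $(-\infty,0)$ we stay away from the branch point $s=0$, so $t\mapsto(-t)^{\alpha}$ is real-valued and infinitely differentiable there; hence $\Psi\in C^{\infty}(-\infty,0)$ and $\Psi$ is real-valued. Next, Lemma~\ref{Lem 3}, applied to precisely this $\Psi$, asserts that the derivative $\Psi^{\prime}$ is absolutely monotonic on $(-\infty,0)$, that is $\Psi^{(k)}(t)\geq0$ for all $t<0$ and all $k\geq1$. Feeding this into Lemma~\ref{Lem 2} yields that $\Phi(t)=\exp[\Psi(t)]=\Phi_{a,\alpha}(t)$ is absolutely monotonic on $(-\infty,0)$, which is exactly the assertion of Lemma~\ref{Lem 4}.

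There is essentially no obstacle here: all the analytic content has already been isolated in Lemmas~\ref{Lem 2} and~\ref{Lem 3}, and Lemma~\ref{Lem 4} is merely their composition. The only item worth recording explicitly is the trivial smoothness and real-valuedness of $\Psi$ on the open half-axis, which is what allows Lemma~\ref{Lem 2} to be invoked verbatim.
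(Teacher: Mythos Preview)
Your proposal is correct and matches the paper's own proof, which simply states that Lemma~\ref{Lem 4} is a consequence of Lemmas~\ref{Lem 2} and~\ref{Lem 3}. You have merely spelled out the straightforward verification that $\Psi$ satisfies the hypotheses of Lemma~\ref{Lem 2}, which the paper leaves implicit.
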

\begin{proof}
Lemma \ref{Lem 4} is a consequence of Lemmas \ref{Lem 2} and \ref{Lem 3}.
\end{proof}

\begin{lem}\label{Lem 5} \textit{Let \(\{a_n\}\) be a sequence of positive numbers and \(\alpha>0\).
Assume that}
\begin{equation}
\label{CoC}
\sum\limits_{n}e^{-a_n\tau}<\infty \ \ \forall\,\tau>0.
\end{equation}

\textit{Then the function
\begin{equation}
\label{TrE}
\Phi(t)=\sum\limits_{n}e^{-a_n(-t)^{\alpha}}
\end{equation}
is absolutely monotonic on \((-\infty,0)\).}
\end{lem}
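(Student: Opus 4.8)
The plan is to realize $\Phi$ as the pointwise limit of the finite partial sums of the series \eqref{TrE} and then to combine two facts already in hand: each individual summand is absolutely monotonic on $(-\infty,0)$ by Lemma \ref{Lem 4}, and a pointwise limit of absolutely monotonic functions is again absolutely monotonic by Lemma \ref{Lem 0}.

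First I would set $S_{N}(t)=\sum_{n\le N}\Phi_{a_{n},\alpha}(t)$, where $\Phi_{a,\alpha}$ is the function of \eqref{MF}, so that the series \eqref{TrE} is $\Phi=\lim_{N\to\infty}S_{N}$ wherever that limit exists. By Lemma \ref{Lem 4} every $\Phi_{a_{n},\alpha}$ is absolutely monotonic on $(-\infty,0)$. Since the finite-difference operators $\Delta_{h}^{k}$ of Definition \ref{DAMF} are linear in their argument, $\Delta_{h}^{k}S_{N}=\sum_{n\le N}\Delta_{h}^{k}\Phi_{a_{n},\alpha}\ge 0$ on the set where it is defined, so each partial sum $S_{N}$ is itself absolutely monotonic on $(-\infty,0)$.

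Next I would verify pointwise convergence. Fix $t\in(-\infty,0)$ and put $\tau=(-t)^{\alpha}$; since $-t>0$ and $\alpha>0$ we have $\tau>0$, so hypothesis \eqref{CoC} gives $\sum_{n}e^{-a_{n}\tau}<\infty$. Consequently $S_{N}(t)=\sum_{n\le N}e^{-a_{n}\tau}$ converges as $N\to\infty$ to the finite value $\Phi(t)=\sum_{n}e^{-a_{n}\tau}$. Thus $\{S_{N}\}$ is a sequence of absolutely monotonic functions on $(-\infty,0)$ possessing a finite pointwise limit, and Lemma \ref{Lem 0} immediately yields that $\Phi$ is absolutely monotonic on $(-\infty,0)$, as claimed.

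There is no genuine obstacle here: all the analytic content has already been absorbed into Lemmas \ref{Lem 2}--\ref{Lem 4}, and the only substantive point---as opposed to bookkeeping---is the substitution $\tau=(-t)^{\alpha}$, which is exactly what converts the given summability condition \eqref{CoC} into convergence of the series \eqref{TrE} at each $t<0$. I would only keep in mind that the step invoking Lemma \ref{Lem 4} uses $\alpha$ in the range for which that lemma applies.
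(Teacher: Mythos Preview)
Your argument is correct and matches the paper's own proof, which simply records that Lemma~\ref{Lem 5} follows from Lemma~\ref{Lem 4} together with Lemma~\ref{Lem 0}. Your closing caveat is well placed: Lemma~\ref{Lem 4} is stated only for \(\alpha\in(0,1)\), so the conclusion is secured precisely in that range (which is all the paper needs, since the application uses \(\alpha=\tfrac{2}{2+\rho}\in(0,1)\)); for \(\alpha>1\) the individual summands are in fact not absolutely monotonic.
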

\begin{proof}
Lemma 5 is a consequence of Lemma \ref{Lem 4} and Lemma \ref{Lem 0}.
\end{proof}
\noindent
\section{Main Theorem.}
\noindent
The following fact is a direct consequence of above stated reasonings.
\begin{thm}\label{MaTh}
\textit{Let
\begin{equation}
\label{OPF}
L_t=-\frac{d^2}{dx^2}+tV(x),
\end{equation}
where \(V(x)\) is of the form \eqref{Pot}, be an one-parametric family of quantum
anharmonic oscillators with a \emph{homogeneous} potential.}

\textit{Then the function
\begin{equation}
\label{TrEF}
\varphi(t)=\textup{trace}\,e^{-L_t}=\sum\limits_{0\leq{}n<\infty}e^{-\lambda_n(t)},
\end{equation}
where \(\{\lambda_n(t)\}_{0\leq{}n<\infty}\) is the sequence of all eigenvalues of the operator \(L_t\), is representable in the
form
\begin{equation}
\label{LaRe}
\varphi(t)=\int\limits_{\lambda\in[0,\infty)}e^{-\lambda{}t}\,d\sigma(\lambda), \ \
0<t<\infty,
\end{equation}
where \(d\sigma\) is a non-negative measure supported on \([0,\infty)\).}
\end{thm}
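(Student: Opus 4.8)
The plan is to assemble the Main Theorem directly from the machinery already developed, so the proof is essentially a bookkeeping argument with one substantive input. First I would invoke Lemma~1: for the homogeneous potential \(V\) of the form \eqref{Pot}, part~1 gives that the spectrum of \(L_1\) is a discrete sequence \(0<\lambda_0<\lambda_1<\cdots\to\infty\); part~3 (using the Weyl-type asymptotics \eqref{AsR}) gives that \(\sum_n e^{-\lambda_n\tau}<\infty\) for every \(\tau>0\); and part~2 gives the scaling identity \(\lambda_n(t)=t^{2/(2+\rho)}\lambda_n(1)\). Writing \(a_n=\lambda_n(1)\) and \(\alpha=2/(2+\rho)\), the exponent \(\alpha\) lies in \((0,1)\) because \(\rho>0\), and the summability hypothesis \eqref{CoC} of Lemma~\ref{Lem 5} holds.

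Next I would rewrite the trace. For \(t>0\),
\begin{equation*}
\varphi(t)=\sum_{0\le n<\infty}e^{-\lambda_n(t)}=\sum_{0\le n<\infty}e^{-\lambda_n(1)\,t^{2/(2+\rho)}}=\sum_{0\le n<\infty}e^{-a_n\,(-s)^{\alpha}}\Big|_{s=-t},
\end{equation*}
since for \(s=-t<0\) we have \((-s)^{\alpha}=t^{\alpha}=t^{2/(2+\rho)}\). Thus \(\varphi(-s)=\Phi(s)\) where \(\Phi\) is exactly the function \eqref{TrE} of Lemma~\ref{Lem 5} with this choice of \(a_n\) and \(\alpha\). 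Lemma~\ref{Lem 5} then tells us that \(s\mapsto\varphi(-s)\) is absolutely monotonic on \((-\infty,0)\).

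Finally I would apply the Bernstein--Widder theorem (part~1, as stated in the excerpt): an absolutely monotonic function on \((-\infty,0)\) is the bilateral Laplace transform \(\int_{[0,\infty)}e^{s\lambda}\,d\sigma(\lambda)\) of a unique non-negative measure \(d\sigma\) supported on \([0,\infty)\). Substituting \(s=-t\) gives the representation \eqref{LaRe}, namely \(\varphi(t)=\int_{[0,\infty)}e^{-\lambda t}\,d\sigma(\lambda)\) for \(0<t<\infty\), which is the assertion.

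I do not expect a genuine obstacle here, since all the real work has been front-loaded into the preceding lemmas; the only points requiring care are purely clerical. One must check that the hypotheses of Lemma~\ref{Lem 5} are literally met — in particular that \(\alpha=2/(2+\rho)\in(0,1)\) and that \eqref{AsR} together with \eqref{HoOr} indeed yields \(\sum_n e^{-a_n\tau}<\infty\) for all \(\tau>0\) (this is exactly the computation displayed at the end of the proof of Lemma~1, part~3). One should also note explicitly that the eigenvalues are listed with multiplicity, though in the one-dimensional Schrödinger case \eqref{SpSeT} they are in fact simple. If anything deserves to be called the ``hard part,'' it is only the verification that the convergence \eqref{CoC} is uniform enough to license the term-by-term application of Lemma~\ref{Lem 0} inside Lemma~\ref{Lem 5}; but this too is already subsumed in the cited lemmas, so the present proof is a one-line deduction from them.
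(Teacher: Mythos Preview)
Your proposal is correct and follows exactly the route the paper intends: the theorem is stated as ``a direct consequence of above stated reasonings,'' and what you have written is precisely the assembly of Lemma~1 (scaling \(\lambda_n(t)=t^{2/(2+\rho)}\lambda_n(1)\) and summability), Lemma~\ref{Lem 5} (with \(a_n=\lambda_n(1)\), \(\alpha=2/(2+\rho)\in(0,1)\)), and the Bernstein--Widder theorem that the paper has in mind. Your observation that each summand \(e^{-\lambda_n(t)}\) is itself absolutely monotonic is also made in the paper immediately after the theorem.
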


Actually we proved more. Namely, we proved that under assumptions of the above Theorem,
\textit{each summand} \(e^{-\lambda_n(t)}\) of the sum in the right hand side of \eqref{TrEF}
is representable in the form
\begin{equation}
\label{EaS}
e^{-\lambda_n(t)}=\int\limits_{\lambda\in[0,\infty)}e^{-\lambda{}t}\,d\sigma_n(\lambda), \ \
0<t<\infty,
\end{equation}
where \(d\sigma_n\) is a non-negative measure supported on \([0,\infty)\). So
\begin{equation}
\label{MS}
d\sigma=\sum\limits_{0\leq{}n<\infty}d\sigma_n,
\end{equation}
where \(d\sigma\) and \(d\sigma_n\) are the measures which appear in the integral representations \eqref{LaRe} and \eqref{EaS} respectively.

If the potential \(V\) is a homogeneous even function: \(V(x)=V(-x)\), that is if
\(c^{+}=c^{-}\) in \eqref{Pot}, then for each \(t\) the subspaces \(L^2_{\textup{ev}}(\mathbb{R})\)
and \(L^2_{\textup{od}}(\mathbb{R})\) of even and odd functions from \(L^2(\mathbb{R})\) are invariant with respect to the operator \(L_t\), \eqref{OPF}. In particular,
each eigenfunction of the operator \(L_t\) is either even or odd.
So the function \(\varphi(t)\), \eqref{TrEF}, splits into the sum of two functions
\begin{equation}
\label{Spl}
\varphi(t)=\varphi_{\textup{ev}}(t)+\varphi_{\textup{od}}(t),
\end{equation}
where the sums
\begin{equation}
\label{EOS}
\varphi_{\textup{ev}}(t)=\sideset{}{_{\textup{ev}}}\sum\limits_{n}e^{-\lambda_n(t)},\quad
\varphi_{\textup{od}}(t)=\sideset{}{_{\textup{od}}}\sum\limits_{n}e^{-\lambda_n(t)}
\end{equation}
are taken over \(n\) corresponding to even and odd eigenfunctions respectively.
From \eqref{EaS}, the integral representations
\begin{equation}
\varphi_{\textup{ev}}(t)=\int\limits_{\lambda\in[0,\infty)}e^{-\lambda{}t}\,
d\sigma_{\textup{ev}}(\lambda),
\ \
\varphi_{\textup{od}}(t)=\int\limits_{\lambda\in[0,\infty)}e^{-\lambda{}t}\,
d\sigma_{\textup{od}}(\lambda), \ \ 0<t<\infty,
\end{equation}
follow, where \(d\sigma_{\textup{ev}}\) and \(d\sigma_{\textup{od}}\) are non-negative measures
supported on \([0,\infty)\).\\[2.0ex]

In \, the \, case \, of \, the one-parametric \, family \, of \, harmonic \, oscillators,\\
\(L_t=-\dfrac{d^2}{dx^2}+tx^2\), the functions \(\varphi\), \(\varphi_{\textup{ev}}\), \(\varphi_{\textup{od}}\) can be calculated explicitly:
\begin{equation}
\label{ExE}
\varphi(t)=\frac{1}{\sh\sqrt{t}}\ccomma \quad
\varphi_{\textup{ev}}(t)=\frac{e^{\sqrt{t}}}{\sh\sqrt{2t}}\ccomma \quad
\varphi_{\textup{od}}(t)=\frac{e^{-\sqrt{t}}}{\sh\sqrt{2t}}\ccomma \quad 0<t<\infty.
\end{equation}
(See \eqref{SpHO} and \eqref{HoOr} with \(\rho=2\).) Without using Lemma 4 and the
representation \eqref{EOS}, it is not
so evident that the functions \(\varphi_{\textup{ev}}(-t)\), \(\varphi_{\textup{od}}(-t)\)
are absolutely monotonic on \((-\infty,0)\).

\section{A conjecture for the anharmonic oscillator\\ which is analogous to the BMV conjecture for matrices.}
\noindent
Let us formulate a conjecture. Assume that two functions \(V_0\) and \(V_1\) are given
which satisfy the conditions:
\begin{enumerate}
\item \(V_0\) and \(V_1\) are defined on the whole real axis \(\mathbb{R}\) and are continuous
there: \(V_0\in{}C(\mathbb{R})\), \ \(V_1\in{}C(\mathbb{R})\).
\item The positivity condition:
\begin{equation}
\label{PoC}
V_0(x)>0,\,V_1(x)>0 \quad\forall\,x\in(-\infty,\infty)\setminus0.
\end{equation}
\item The unboundedness condition
\begin{equation}
\label{UbC}
V_0(x)+V_1(x)\to\infty \ \ \textup{as} \ \ x\to\pm\infty.
\end{equation}
\end{enumerate}
{\ }

\vspace{0.5ex}
\noindent
Let as consider the one-parametric family of operators
\begin{equation}
\label{GOPF}
\mathscr{L}_t=-\frac{d^2}{dx^2}+\big(V_0(x)+tV_1(x)\big),
\end{equation}
where \(t\in(0,\infty)\).

The conditions \eqref{PoC} and \eqref{UbC} ensure that for each \(t>0\) the spectrum
of the operator \(\mathscr{L}_t\) is discrete and strictly positive. This means
that for each \(t>0\), the eigenvalue problem
\begin{equation}
\label{GEVPr}
-\frac{d^2f(x)}{dx^2}+\big(V_0(x)+tV_1(x)\big)f(x)=\lambda{}f(x), \ \ f(x)\in{}L^2(\mathbb{R}),\ f\not\equiv0
\end{equation}
is solvable only for those values \(\lambda=\lambda_n(t)\) which form a sequence tending to
\(+\infty\):
\begin{equation*}
0<\lambda_0(t)<\lambda_1(t)<\lambda_2(t)<\,\cdots\,, \ \
\lambda_n(t)\to+\infty \ \textup{as} \ n\to\infty.
\end{equation*}
Let us define the function \(\Phi(t)=\textup{trace}\,e^{-\mathscr{L}(t)}\) as
\begin{equation}
\label{GeTr}
\Phi(t)=\sum\limits_{n}e^{-\lambda_n(t)},
\end{equation}
where the sum is taken over all eigenvalues \(\lambda_n(t)\) of the eigenvalue
problem~\eqref{GEVPr}.\\

If both functions \(V_0\) and \(V_1\) are even: \(V_0(x)=V_0(-x)\), \(V_1(x)=V_1(-x)\),
then each eigenfunction \(f(x)\) of the eigenvalue problem \eqref{GEVPr} is either even or odd.
In this case we can define the "partial" traces
\begin{equation}
\label{PaTr}
\Phi_{\textup{ev}}(t)=\sideset{}{_{\textup{ev}}}\sum\limits_{n}e^{-\lambda_n(t)},\quad
\Phi_{\textup{od}}(t)=\sideset{}{_{\textup{od}}}\sum\limits_{n}e^{-\lambda_n(t)},
\end{equation}
where the sums \(\sideset{}{_{\textup{ev}}}\sum\) and \(\sideset{}{_{\textup{od}}}\sum\)
are taken over \(n\) corresponding even and odd eigenfunctions of the eigenvalue problem \eqref{GEVPr}.

Of course we should pose some condition on the functions \(V_0\) and \(V_1\) which ensure that
\begin{equation}
\label{TCO}
\Phi(t)<\infty \ \ \forall\,t>0.
\end{equation}
The condition
\begin{enumerate}
\item[{4}.]
{\ }
\vspace*{-4.0ex}
\begin{equation}
\label{MTS}
\varliminf_{|x|\to\infty}\frac{\ln(V_0(x)+V_1(x))}{\ln|x|}>0
\end{equation}
\end{enumerate}
is more than sufficient for \eqref{TCO}.\\

\noindent
\textbf{Conjecture}. Let \(V_0(x)\) and \(V_1(x)\) be functions from \(C(\mathbb{R})\)
which satisfy the conditions \eqref{PoC},  \eqref{UbC}, and  \eqref{MTS}. Let \(\Phi(t)\)
be the "trace" function constructed from the eigenvalues \(\lambda_n(t)\) of the eigenvalue
problem \eqref{GEVPr}. Then the function \(\Phi(-t)\) is absolutely monotonic on
\((-\infty,0)\), so the function \(\Phi\) admits the integral representation
\begin{equation}
\label{IRTF}
\Phi(t)=\int\limits_{\lambda\in[0,+\infty)}e^{-t\lambda}\,d\sigma(\lambda),\ \ t\in(0,+\infty),
\end{equation}
where \(d\sigma(\lambda)\) is a non-negative measure supported on \([0,+\infty)\).

If both functions \(V_0\) and \(V_1\) are even, then each of the "partial trace functions"
\(\Phi_{\textup{ev}}\) and \(\Phi_{\textup{od}}\) admit the integral representation
\begin{equation}
\label{IRPTF}
\Phi_{\textup{ev}}(t)=\hspace*{-8pt}\int\limits_{\lambda\in[0,+\infty)}
\hspace*{-8pt}e^{-t\lambda}\,
d\sigma_{\textup{ev}}(\lambda), \ \
\Phi_{\textup{od}}(t)=\hspace*{-8pt}\int\limits_{\lambda\in[0,+\infty)}\hspace*{-8pt}
e^{-t\lambda}\,
d\sigma_{\textup{od}}(\lambda), \ \ t\in(0,\infty),
\end{equation}
where \(d\sigma_{\textup{ev}}\) and \(d\sigma_{\textup{od}}\) are non-negative
measures supported on \([0,\infty)\).\\

It is interesting to confirm this conjecture even in the special case  of the one parametric family of quartic oscillators
\begin{equation}
\label{FQO}
-\frac{d^2}{dx^2}+(x^2+tx^4).
\end{equation}

\noindent
\textbf{Question.} For which functions \(V_0\) and \(V_1\), each summand \(e^{-\lambda_n(t)}\) of the "trace sum" \eqref{GeTr} possesses the property "\emph{the function \(e^{-\lambda_n(-t)}\)
is absolutely monotonic on \((-\infty,0)\)}".

\end{document}